\documentclass[11pt]{amsart}

\usepackage{amsmath, amssymb, amsfonts,enumerate}
\usepackage[all]{xy}
\usepackage{amscd}
\usepackage{comment}
\usepackage{mathtools}
\usepackage{tikz} 
\usepackage{tikz-cd} 
\tikzcdset{diagrams={nodes={inner sep=7.5pt}}}

\usepackage{url}
\usepackage{hyperref}

\newtheorem{theorem}{Theorem}[section]
\newtheorem{lemma}[theorem]{Lemma}
\newtheorem{corollary}[theorem]{Corollary}

\newtheorem{theoremletter}{Theorem}

 \theoremstyle{definition}

\newtheorem*{Claim}{Claim}

  \newtheorem*{example*}{Example}

\numberwithin{equation}{section}
 
\newcommand {\N}{\mathbb{N}} 
\newcommand {\Z}{\mathbb{Z}}

\newcommand{\PP}{\mathcal{P}}

\newcommand{\A}{\mathbb{A}}


%

\DeclareMathOperator{\End}{End}

\DeclareMathOperator{\Id}{Id}

\DeclareMathOperator{\Spec}{Spec}

\begin{document}

\title[Geometric Kaplansky's direct finiteness conjecture]{A geometric generalization of  Kaplansky's direct finiteness conjecture}
\author[Xuan Kien Phung]{Xuan Kien Phung}
\email{phungxuankien1@gmail.com}
\subjclass[2010]{14A10, 14A15, 16S34, 20C07, 37B10, 68Q80}
\keywords{group ring, near ring, direct finiteness, stable finiteness, sofic group, surjunctivity, algebraic variety, algebraic group, cellular automata} 
\begin{abstract} 
Let $G$ be a group and let $k$ be a field. Kaplansky's direct finiteness conjecture states that every one-sided unit of the group ring $k[G]$ must be a two-sided unit. In this paper,  
we establish a geometric direct finiteness theorem for endomorphisms of symbolic algebraic varieties.  Whenever $G$ is a sofic group or more generally a surjunctive group, our result implies a generalization of Kaplansky's direct finiteness conjecture for the near ring $R(k,G)$ which is  $k[X_g\colon g \in G]$ as a group and which contains naturally $k[G]$ as the subring of homogeneous polynomials of degree one. We also prove that  Kaplansky's stable finiteness conjecture is a consequence of Gottschalk's Surjunctivity conjecture. 
\end{abstract} 
\date{\today}
\maketitle

\setcounter{tocdepth}{1}

\section{Introduction}
In \cite{kap}, 
Kaplansky
conjectured that for every group $G$ and for  every field $k$, the group ring $k[G]$ is directly finite, i.e., for $a,b \in k[G]$ with  $ab=1$, one has $ba=1$. 
The conjecture was settled by 
Kaplansky himself in \cite{kap} in characteristic zero but is still open in positive characteristic. However, in arbitrary characteristic, the conjecture is known for the wide class of sofic groups $G$ (see \cite{gromov-esav},  \cite{weiss-sgds}) while no examples of non sofic groups are known in the literature. 
Results in  \cite{elek} proved that $k[G]$ is directly finite
for every field $k$ and every sofic group $G$. More generally, the papers \cite{elek} (see also \cite{ara}), \cite{csc-artinian}, and  \cite{li-liang} show that  $k[G]$ is directly finite for every sofic group
$G$ when $k$ is respectively a division ring, an Artinian
ring, a left Noetherian ring. 
\par 
Our first goal is to establish an algebraic generalization of Kaplansky's direct finiteness conjecture for the class of near rings $R(K,G)$ associated with a field $K$ and a sofic group $G$ that we describe briefly in the sequel. 
\par 
Let $k$ be a field and let $G$ be a group. Following \cite{phung-2020}, we have a near ring $(R(k,G), +, \star)$ where   $(R(k,G),+)=(k[X_g: g\in G],+)$ as a group while the multiplication $\star$ is induced naturally by the group $G$ as follows. 
\par 
Let us consider the semi-ring: 
\begin{equation}
\N[G] \coloneqq \{ f\colon G \to \N \colon f(g) \neq 0 \text{ for finitely many } g\in G \}. 
\end{equation}
\par 
With each $u\in \N[G]$, we associate a monomial 
$
X^u \coloneqq \prod_{g \in G} X_g^{u(g)}$ with the convention $X_g^0 =1$ so that 
\begin{equation} 
R(K,G)=\left\{\sum_{u \in \N[G]} \alpha(u) X^{u} \colon \alpha(u) \neq 0 \text{ for finitely many } u\in \N[G]\right\}.
\end{equation}
\par 
We now describe the natural actions of $G$ on $\N[G]$ and $R(k,G)$.  
Let us fix $g\in G$. For $u \in \N[G]$, we define $gu \in \N[G]$ by setting  
$(gu)(h)\coloneqq u(g^{-1}h)$ for every $h \in G$. Then for  $\gamma=\sum_{w \in \N[G]} \gamma(w) X^{w} \in R(k,G)$, we set  
\begin{equation} 
g\gamma \coloneqq \sum_{w \in \N[G]} \gamma(w) X^{gw} \in R(k,G).
\end{equation} 
\par 

For   
$\alpha=\sum_{u \in \N[G]} \alpha(u) X^{u}$ and 
$\beta=\sum_{v \in \N[G]} \beta(v) X^{v}$  elements of $R(K,G)$, 
their multiplication $\alpha \star \beta$   is given by: 
\begin{align}
\label{e:def-nr}
\alpha \star \beta & \coloneqq \sum_{u \in \N[G]} \alpha(u) X^u\star\beta  \coloneqq \sum_{u \in \N[G]} \alpha(u) \prod_{g \in G} (g\beta)^{u(g)}. 
\end{align}
\par 
Therefore,  
$\alpha \star \beta = 
 \sum_{u \in \N[G]} \alpha(u) \prod_{g \in G} \left(\sum_{v \in  \N[G]}\beta(v) X^{gv} \right)^{u(g)}$. 
 \par 
 
\begin{example*}
Let $g,h,s,t \in G$ and 
$\alpha = X_gX_{h}^2$ + 1, $\beta = X_{s}^2 - X_{t}^3$. 
Then   
\[
\alpha \star \beta = (X_{gs}^2 - X_{gt}^3) (X_{hs}^2 - X_{ht}^3)^2+1, \, \, 
\beta \star \alpha = (X_{sg}X_{sh}^2+1)^2 - (X_{tg}X_{th}^2+1)^3. 
\]
\end{example*}

It follows from \cite[Proposition~10.4]{phung-2020} that  
 $(R(k,G), +, \star )$ is a left near ring, i.e., 
$(R(k,G), \star )$ is a monoid with identity element $X_{1_G}$, and we have 
$
(\alpha +\beta) \star \gamma = \alpha \star \gamma + \beta \star \gamma$ 
for all $\alpha, \beta, \gamma \in R(k,G)$. 
\par 
The first main result of the paper is the following  generalization of Kaplansky's direct finiteness conjecture for the near ring $R(k,G)$ over sofic groups in arbitrary characteristic (see  Theorem~\ref{t:kap-near-ring-surjunctive-group}):  

\begin{theoremletter}
\label{t:kap-near-ring-direct-finite}
Let $G$ be a sofic group and let $k$ be a field. Suppose that  
 $\alpha, \beta \in R(k,G)$ verify  $\alpha \star \beta= X_{1_G}$. Then one has  $\beta \star \alpha=X_{1_G}$. 
\end{theoremletter}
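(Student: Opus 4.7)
The plan is to translate the near-ring identity $\alpha \star \beta = X_{1_G}$ into a composition identity for algebraic cellular automata on the symbolic algebraic variety $(\A^1_k)^G$, and then invoke the geometric direct finiteness theorem for endomorphisms of symbolic algebraic varieties over sofic (or surjunctive) groups announced in the abstract. To each $\alpha = \sum_u \alpha(u) X^u \in R(k,G)$ I would associate the $G$-equivariant endomorphism $\tau_\alpha \colon k^G \to k^G$ whose local rule at $1_G$ is the polynomial $\alpha$; explicitly,
\[
\tau_\alpha(x)(h) \;=\; \sum_{u \in \N[G]} \alpha(u) \prod_{g \in G} x(hg)^{u(g)}, \qquad x \in k^G,\ h \in G.
\]
A direct unwinding of the definition of $g\beta$ and of $\star$ in \eqref{e:def-nr} shows that $\tau_{\alpha \star \beta} = \tau_\alpha \circ \tau_\beta$ and $\tau_{X_{1_G}} = \Id$. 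Since $\tau_\alpha$ is genuinely a morphism of (symbolic) algebraic varieties---determined by its polynomial local rule rather than by its underlying set map---the correspondence $\alpha \mapsto \tau_\alpha$ is faithful over an arbitrary field $k$.

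\textbf{Main step.} Under this dictionary the hypothesis becomes $\tau_\alpha \circ \tau_\beta = \Id$; in particular $\tau_\beta$ is an injective algebraic endomorphism of $(\A^1_k)^G$. I would then invoke the geometric surjunctivity theorem for symbolic algebraic varieties over sofic (or, more generally, surjunctive) groups promised in the paper: every injective algebraic endomorphism of such a symbolic variety is surjective. This forces $\tau_\beta$ to be bijective, so its left inverse $\tau_\alpha$ is automatically its two-sided inverse. Reading the resulting identity $\tau_\beta \circ \tau_\alpha = \Id = \tau_{X_{1_G}}$ backwards through the faithful correspondence $\alpha \mapsto \tau_\alpha$ yields $\beta \star \alpha = X_{1_G}$.

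\textbf{Main obstacle.} The conceptually nontrivial ingredient is the geometric direct finiteness (symbolic surjunctivity) theorem itself; everything else is a careful bookkeeping of how polynomial substitution in \eqref{e:def-nr} corresponds to composition of local rules. A point that requires some care is faithfulness of $\alpha \mapsto \tau_\alpha$ when $k$ is finite, where distinct polynomials may induce equal set-theoretic functions on $k$; this is why one must treat $\tau_\alpha$ as a morphism of schemes (equivalently, evaluate after base change to an infinite overfield) rather than as a map of sets. Modulo these checks, Theorem~\ref{t:kap-near-ring-direct-finite} is a clean consequence of the geometric main theorem.
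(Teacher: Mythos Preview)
Your approach is essentially the paper's: translate $\alpha\mapsto\tau_\alpha$ into $CA_{alg}(G,\A^1,K)$, apply the main geometric theorem, and translate back. The paper does exactly this, citing the near-ring isomorphism $R(K,G)\simeq CA_{alg}(G,\A^1,K)$ from \cite{phung-2020} and base-changing to an \emph{algebraically closed} field $K$ (not merely infinite), since Theorem~\ref{t:main-ca-alg-direct} is stated over such $K$.

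One point needs correction: the geometric theorem proved here is \emph{direct finiteness} (Theorem~\ref{t:main-ca-alg-direct}: if $\sigma\circ\tau=\Id$ with $\sigma,\tau\in CA_{alg}$, then $\tau\circ\sigma=\Id$), not a surjunctivity statement ``injective algebraic endomorphism $\Rightarrow$ surjective''. The latter is not established in the paper, and your detour through ``$\tau_\beta$ injective $\Rightarrow$ surjective $\Rightarrow$ bijective $\Rightarrow$ left inverse is two-sided'' invokes a result that is not available. Fortunately you do not need it: since you already have $\tau_\alpha\circ\tau_\beta=\Id$ with both sides in $CA_{alg}$, Theorem~\ref{t:main-ca-alg-direct} gives $\tau_\beta\circ\tau_\alpha=\Id$ immediately. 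With that adjustment your argument matches the paper's proof of Theorem~\ref{t:kap-near-ring-surjunctive-group}.
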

\par 
In particular,  Theorem~\ref{t:kap-near-ring-direct-finite} generalizes \cite[Theorem~10.12]{phung-2020} where $G$ is required to be a residually finite group. 
\par 
On the other hand,  
by \cite[Proposition~10.5]{phung-2020}, it is known that 
for every field $k$ and for every group $G$,  
the canonical map 
$\Phi \colon k[G] \to R(k,G)$ determined  by $ \Sigma_{g \in G} \alpha(g)g \longmapsto  \Sigma_{g \in G} \alpha(g) X_g$ is an embedding of near rings. \par 
As a consequence, we deduce: 

\begin{corollary}
\label{c:sofic-direct-intro}
Let $G$ be a sofic group and let $k$ be a field. Suppose that $\alpha, \beta \in k[G]$ satisfy $\alpha \beta=1$. Then one has $\beta \alpha=1$. \qed 
\end{corollary}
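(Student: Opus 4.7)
The plan is to transport the identity $\alpha \beta = 1$ from $k[G]$ into the near ring $R(k,G)$ by means of the embedding $\Phi$ recalled above, apply Theorem~\ref{t:kap-near-ring-direct-finite} there, and then pull the resulting identity back to $k[G]$ using the injectivity of $\Phi$.

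More precisely, since $\Phi \colon k[G] \to R(k,G)$ is a homomorphism of near rings (\cite[Proposition~10.5]{phung-2020}), we have
\[
\Phi(\alpha) \star \Phi(\beta) = \Phi(\alpha \beta) = \Phi(1) = X_{1_G}.
\]
As $G$ is sofic, Theorem~\ref{t:kap-near-ring-direct-finite} applies to the pair $(\Phi(\alpha), \Phi(\beta))$ in $R(k,G)$ and yields $\Phi(\beta) \star \Phi(\alpha) = X_{1_G}$. Using multiplicativity of $\Phi$ once more, this reads $\Phi(\beta\alpha) = \Phi(1)$, and since $\Phi$ is an embedding, hence injective, we conclude that $\beta\alpha = 1$ in $k[G]$, as desired.

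The only points that must be cited rather than verified anew are that $\Phi$ is a well-defined near ring embedding and that $\Phi(1_{k[G]}) = X_{1_G}$; both are contained in \cite[Proposition~10.5]{phung-2020}. There is no real obstacle in this corollary: once Theorem~A is granted, the statement is essentially a two-line consequence obtained by transferring the one-sided inverse relation across $\Phi$ and then using injectivity.
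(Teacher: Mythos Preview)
Your argument is correct and matches the paper's intended deduction: the corollary is stated with a \qed\ immediately after recalling that $\Phi \colon k[G] \to R(k,G)$ is a near ring embedding, so the paper's proof is precisely the transfer-via-$\Phi$ step you wrote out. There is nothing to add or correct.
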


\par 
Now fix a set $A$ called the \emph{alphabet},  and a group  $G$, the \emph{universe}.
A \emph{configuration} $c \in A^G$ is a map $c \colon G \to A$. 
The Bernoulli shift $G \times A^G \to A^G$ is defined by $(g,c) \mapsto g c$, 
where $(gc)(h) \coloneqq  c(g^{-1}h)$ for  $g,h \in G$ and $c \in A^G$. 
\par
Introduced by von Neumann \cite{neumann}, a \emph{cellular automaton} over  the group $G$ and the alphabet $A$ is a map
$\tau \colon A^G \to A^G$ admitting a finite \emph{memory set} $M \subset G$
and a \emph{local defining map} $\mu \colon A^M \to A$ such that 
\begin{equation*} 
\label{e:local-property}
(\tau(c))(g) = \mu((g^{-1} c )\vert_M)  \quad  \text{for all } c \in A^G \text{ and } g \in G.
\end{equation*} 
\par 
The well-known Gottschalk's conjecture \cite{gottschalk} asserts that over any universe, every injective cellular automaton with finite alphabet is surjective. We call a group $G$ \emph{surjunctive} if it satisfies Gottschalk's conjecture, i.e., for every finite alphabet $A$, every injective cellular automaton $\tau \colon A^G \to A^G$ must be surjective. Hence, it follows from Gromov-Weiss theorem (\cite{gromov-esav}, \cite{weiss-sgds}) that all sofic groups are surjunctive. 
\par 
Generalizing the direct finiteness conjecture, Kaplansky's stable finiteness conjecture for a field $k$ and a group $G$ states that the ring $\mathrm{Mat}_n(k[G])$ of square matrices of size $n$ with coefficients in $k[G]$ is directly finite for every $n \geq 1$. We establish the following general result  (Section~\ref{s:endo-ring-kaplansky}): 
\begin{theoremletter}
\label{t:intro-gottchalk-kaplansky} 
Kaplansky's stable finiteness conjecture holds for every surjunctive group $G$ and every field $k$. In particular,  Gottschalk's conjecture implies Kaplansky's stable finiteness conjecture. 
\end{theoremletter}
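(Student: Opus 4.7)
The plan is to translate the identity $AB = I_n$ in $\mathrm{Mat}_n(k[G])$ into the injectivity of a linear cellular automaton over a finite alphabet, to which surjunctivity of $G$ immediately applies. Let $A, B \in \mathrm{Mat}_n(k[G])$ satisfy $AB = I_n$, and suppose, for contradiction, that $BA \neq I_n$.

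\emph{Reduction to a finite field.} Kaplansky's original theorem gives stable finiteness of $k[G]$ whenever $\mathrm{char}(k) = 0$, so we may assume $\mathrm{char}(k) = p > 0$. The entries of $A$ and $B$ involve only finitely many scalars of $k$; let $R \subseteq k$ be the $\mathbb{F}_p$-subalgebra they generate, a finitely generated $\mathbb{F}_p$-algebra. Pick a nonzero scalar $s \in R$ that occurs as a coefficient of some entry of $BA - I_n \in \mathrm{Mat}_n(R[G])$. By the Nullstellensatz, the finitely generated $\mathbb{F}_p$-algebra $R[s^{-1}]$ admits a ring homomorphism to some finite field $\mathbb{F}_q$. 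Its reductions $\bar A, \bar B \in \mathrm{Mat}_n(\mathbb{F}_q[G])$ still satisfy $\bar A \bar B = I_n$, while by the choice of $s$ we also have $\bar B \bar A \neq I_n$. It therefore suffices to rule this out over $\mathbb{F}_q$.

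\emph{Passage to cellular automata.} Setting the finite alphabet $A_0 \coloneqq \mathbb{F}_q^n$, each matrix $M \in \mathrm{Mat}_n(\mathbb{F}_q[G])$ defines a $G$-equivariant $\mathbb{F}_q$-linear cellular automaton $\tau_M \colon A_0^G \to A_0^G$ via the standard correspondence. The assignment $M \mapsto \tau_M$ is a ring map that sends $I_n$ to $\mathrm{id}_{A_0^G}$ and converts matrix multiplication into composition of CA (in the appropriate order); moreover it is injective, since a linear CA with finite memory is determined by its action on configurations of finite support.

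\emph{Applying surjunctivity.} From $\bar A \bar B = I_n$ we obtain $\tau_{\bar A} \circ \tau_{\bar B} = \mathrm{id}_{A_0^G}$, whence $\tau_{\bar B}$ is injective. Since $A_0$ is finite and $G$ is surjunctive, $\tau_{\bar B}$ is also surjective, hence a bijection. Composing $\tau_{\bar A} \circ \tau_{\bar B} = \mathrm{id}$ with $\tau_{\bar B}^{-1}$ on the right yields $\tau_{\bar A} = \tau_{\bar B}^{-1}$, so $\tau_{\bar B \bar A} = \tau_{\bar B} \circ \tau_{\bar A} = \mathrm{id}_{A_0^G} = \tau_{I_n}$. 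Injectivity of $M \mapsto \tau_M$ then forces $\bar B \bar A = I_n$, the desired contradiction.

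\emph{Main obstacle.} Each individual step is essentially standard; the only nontrivial piece of bookkeeping is the functorial dictionary between $\mathrm{Mat}_n(k[G])$ and linear CA on $(k^n)^G$, which must correctly encode matrix multiplication and the identity element and be injective. Once that dictionary is in place, a single Nullstellensatz reduction to a finite residue field and one application of surjunctivity to $\tau_{\bar B}$ complete the proof.
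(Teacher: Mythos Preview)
Your argument is correct and takes a genuinely different route from the paper. The paper deduces this theorem as an immediate corollary of its main geometric result (Theorem~\ref{t:main-ca-alg-direct}): it embeds $k$ into an algebraically closed field $K$, identifies $\mathrm{Mat}_n(K[G])$ with the ring $LCA(G,K^n)$ of linear cellular automata, and then invokes direct finiteness of the much larger monoid $CA_{alg}(G,\mathbb{A}^n,K)$, whose proof in Section~\ref{s:proof-main-geometric} requires spreading out over a finitely generated $\Z$-algebra, Chevalley's theorem, and the Jacobson property to reach finite residue fields. You bypass all of this scheme-theoretic machinery with a single Nullstellensatz reduction at the level of matrix coefficients, landing directly on a finite alphabet $\mathbb{F}_q^n$ where surjunctivity applies verbatim. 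Your approach is thus more elementary and self-contained for this particular statement; the paper's approach, by contrast, earns the full strength of Theorem~\ref{t:main-ca-alg-direct} (direct finiteness for arbitrary algebraic, not just linear, local rules), of which the present theorem is then a cheap specialization. One minor remark: your appeal to Kaplansky's characteristic-zero theorem is unnecessary --- taking $R$ to be the $\Z$-subalgebra generated by the coefficients and using that maximal ideals of finitely generated $\Z$-algebras have finite residue fields handles all characteristics uniformly, and is in fact the same reduction the paper carries out inside the proof of Theorem~\ref{t:main-ca-alg-direct}.
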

\par 
Let $G$ be a group and let $X$  be an  algebraic variety over a field $k$. Denote by $A=X(k)$ the set of rational points of $X$. Then the set $CA_{alg}(G,X,k)$ of \emph{algebraic cellular automata} consists of cellular automata $\tau \colon A^G \to A^G$ which  admit a memory  $M\subset G$ 
with local defining map $\mu \colon A^M \to A$ induced by some $k$-morphism of algebraic varieties 
$f \colon X^M \to X$, i.e., $\mu=f\vert_{A^M}$, 
where $X^M$ is the fibered product of copies of $X$ indexed by $M$. 
\par 
The central geometric result of the paper is the following direct finiteness property of the class of algebraic cellular automata $CA_{alg}$ (Section~\ref{s:proof-main-geometric}): 
 
\begin{theoremletter}
\label{t:main-ca-alg-direct}
Let $G$ be a surjunctive group.  Let $X$ be an algebraic variety over an   algebraically closed field $K$. Suppose that $\tau , \sigma \in CA_{alg}(G,X,K)$ satisfy  $\sigma \circ \tau = \Id$. Then one has  
$\tau \circ \sigma = \Id$. 
\end{theoremletter}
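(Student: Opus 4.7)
The plan is to show that $\tau$ is bijective: once this is established, $\sigma \circ \tau = \Id$ forces $\sigma = \tau^{-1}$, and therefore $\tau \circ \sigma = \Id$. Since $\sigma \circ \tau = \Id$ already implies $\tau$ is injective on $X(K)^G$, the real work is to prove that every injective algebraic cellular automaton over a surjunctive group $G$ is surjective.

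To attack surjectivity I would pass to finite windows. Choose a common memory set $M \subset G$ for $\tau$ and $\sigma$ with defining $K$-morphisms $f, h \colon X^M \to X$. For every finite $E \subset G$ the map $\tau$ induces a morphism of $K$-varieties $\tau_E \colon X^{EM} \to X^E$, and similarly for $\sigma$; by Chevalley's theorem, the image of $\tau_E$ on $K$-points is a constructible subset of $X^E$. The identity $\sigma \circ \tau = \Id$ restricts to a corresponding finite-level identity relating $\tau_E$, $\sigma_E$, and the natural coordinate projections. Combined with the global injectivity of $\tau$ and Noetherianity of the Zariski topology, this should force $\tau_E$ to be dominant for all sufficiently large $E$.

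The decisive step is to convert this finite-window dominance into a contradiction with surjunctivity of $G$ under the hypothesis that $\tau$ fails to be globally surjective. The naive move---restrict to $S^G$ for some finite $S \subset X(K)$ and invoke surjunctivity directly---fails because $\tau$ need not preserve $S^G$. The intended workaround would exploit Zariski density of finite subsets of $X(K)$ together with the constructibility of $\tau_E(X(K)^{EM})$: a witness of non-surjectivity of $\tau$ is captured by a constructible proper subset of some $X^E$, and by enlarging $S$ appropriately one extracts an induced injective cellular automaton over the finite alphabet $S$ whose image misses this obstruction, contradicting the surjunctivity of $G$. The main obstacle is precisely this reduction: one must simultaneously arrange $S$ to be large enough to carry the obstruction, small enough to remain finite, and stable enough under memory-set inflation to yield a genuine cellular automaton on $S^G$; threading these three constraints, plausibly via a Noetherian approximation of the Zariski-closed obstruction together with generic-point arguments on the morphisms $\tau_E$, is where the technical heart of the proof will lie.
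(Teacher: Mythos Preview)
You have correctly isolated the central difficulty---to invoke surjunctivity of $G$ one needs finite alphabets $S$ stable under the local defining map $\mu$---and you openly leave this step unresolved. Unfortunately the workaround you sketch cannot be made to work inside $X(K)$. Over a general algebraically closed field $K$ there need not exist \emph{any} nonempty finite $S \subset X(K)$ with $\mu(S^M) \subset S$: already for $X=\A^1_{\C}$ and $\mu(x)=x+1$ there is none. No Noetherian approximation or generic-point argument on the $\tau_E$ will manufacture such $S$, because the obstruction is arithmetic rather than geometric. (Your phrase ``Zariski density of finite subsets of $X(K)$'' is also not literally true for positive-dimensional $X$.)

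The missing idea, which is exactly what the paper supplies, is to change the base. One spreads out $X$, $\mu$, $\eta$, and the finite-window map $\tau_M^+ \colon A^{M^2}\to A^M$ to a finitely generated $\Z$-algebra $R \subset K$, obtaining an $R$-morphism $\varphi$ with $\tau_M^+=\varphi\otimes_R K$. The target $A_R^M$ is then a Jacobson $\Z$-scheme of finite type whose closed points all have \emph{finite} residue fields; by the extension of Grothendieck's lemma in Section~3 they are exhausted by finite sets $T_{p,s,d}^M$, where $T_{p,s,d}$ collects points of the fibre $A_s$ with residue field of cardinality at most $p^d$. Since any morphism of $\kappa(s)$-schemes of finite type preserves bounds on residue-field degree, these $T_{p,s,d}$ are \emph{automatically} stable under the local maps---precisely the stability you could not arrange over $K$. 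Surjunctivity applied to the resulting injective cellular automata on $T_{p,s,d}^G$ gives $T_{p,s,d}^M \subset \varphi(A_R^{M^2})$ for all $p,s,d$; Chevalley plus the Jacobson property then force $\varphi$, and hence $\tau_M^+$ after base change, to be surjective. A final remark: the paper never proves $\tau$ bijective. It deduces only that $\tau(A^G)$ is prodiscretely dense, and concludes $\tau\circ\sigma=\Id$ by continuity of cellular automata; aiming for full surjectivity of $\tau$, as you do, is stronger than what is needed.
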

\par 
By \cite[Theorem~10.8]{phung-2020}, we have a  canonical isomorphism of near rings 
$R(k,G) \simeq CA_{alg}(G,\A^1,k)$ where $\A^1$ denotes the affine line and $k$ is an infinite field.  Consequently,  we can deduce an extension (Theorem~\ref{t:kap-near-ring-surjunctive-group}) of  Theorem~\ref{t:kap-near-ring-direct-finite} directly from Theorem~\ref{t:main-ca-alg-direct} where the group $G$ is only required to be surjunctive since we can suppose without loss of generality that $k$ is  algebraically closed. 
\par 
As another application of Theorem~\ref{t:main-ca-alg-direct}, we obtain (see Section~\ref{s:endo-ring-kaplansky}):
\begin{theoremletter}
\label{t:direct-group-endo-algr-intro}
Let $G$ be a surjunctive group. Let $R $ be the endomorphism ring of a commutative algebraic group over an  algebraically closed field. Then the group ring $R[G]$ is stably finite. 
\end{theoremletter}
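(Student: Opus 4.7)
The plan is to embed, for each $n \geq 1$, the matrix ring $\mathrm{Mat}_n(R[G])$ into the monoid $\mathrm{CA}_{alg}(G, H^n, K)$ of algebraic cellular automata and invoke Theorem~\ref{t:main-ca-alg-direct}. Let $H$ be the commutative algebraic group over the algebraically closed field $K$ with $R = \End(H)$. Commutativity of $H$ gives $\End(H^n) = \mathrm{Mat}_n(R)$, and the canonical identification $\mathrm{Mat}_n(R[G]) \cong \mathrm{Mat}_n(R)[G]$ rewrites the $n$-th instance of stable finiteness of $R[G]$ as the ordinary direct finiteness of $\End(H^n)[G]$. Since $H^n$ is again a commutative algebraic group over $K$, it suffices to prove that $\End(H)[G]$ is directly finite for any such $H$.

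For this, I will construct a ring homomorphism
\[
\Phi\colon \End(H)[G] \longrightarrow \mathrm{CA}_{alg}(G, H, K), \qquad a = \sum_{g \in M} a(g)\, g \longmapsto \tau_a,
\]
where $M \subset G$ is the finite support of $a$ and $\tau_a$ is the map defined by $\tau_a(c)(h) = \sum_{g \in M} a(g)(c(hg))$ for $c \in H(K)^G$ and $h \in G$. This fits the cellular automaton format $\tau_a(c)(h) = \mu_a((h^{-1}c)|_M)$ with local map $\mu_a(x_M) = \sum_g a(g)(x_g)$, which is the restriction to $K$-points of a genuine $K$-morphism $H^M \to H$ assembled from the group law on $H$ and the algebraic endomorphisms $a(g)$; hence $\tau_a \in \mathrm{CA}_{alg}(G, H, K)$. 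A short double-sum rearrangement using additivity of each $a(g)$ then yields $\Phi(ab) = \Phi(a) \circ \Phi(b)$, and $\Phi(1_{R[G]}) = \Id$ is immediate.

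Injectivity of $\Phi$ follows from Zariski density of $K$-points: if $\Phi(a) = 0$, then for every $g_0 \in G$ and every $x \in H(K)$, the configuration $c$ with $c(g_0) = x$ and $c(g) = 0_H$ otherwise satisfies $\tau_a(c)(1_G) = a(g_0)(x) = 0$, so $a(g_0)$ vanishes on the dense subset $H(K) \subset H$ and is therefore the zero morphism. Finally, if $ab = 1$ in $\End(H)[G]$, then $\Phi(a) \circ \Phi(b) = \Id$, and Theorem~\ref{t:main-ca-alg-direct} applied to the surjunctive group $G$ and the variety $H$ gives $\Phi(b) \circ \Phi(a) = \Id = \Phi(1)$; injectivity then forces $ba = 1$. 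The only place any effort is required is verifying that $\mu_a$ truly defines a $K$-morphism of varieties, but this is essentially by construction, since $\End(H)$ consists of algebraic morphisms and the group law on $H$ is algebraic; all of the real content is absorbed into Theorem~\ref{t:main-ca-alg-direct}.
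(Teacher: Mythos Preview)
Your proposal is correct and follows essentially the same route as the paper: reduce $\mathrm{Mat}_n(R[G])$ to $\End(H^n)[G]$ via the identifications $\mathrm{Mat}_n(R)\cong\End(H^n)$ and $\mathrm{Mat}_n(R[G])\cong\mathrm{Mat}_n(R)[G]$, then embed $\End(H)[G]$ into $CA_{alg}(G,H,K)$ and invoke Theorem~\ref{t:main-ca-alg-direct}. The only cosmetic difference is that the paper cites these identifications and the embedding $\End_K(X)[G]\simeq CA_{algr}(G,X,K)$ from \cite{phung-2020}, whereas you write out the map $\Phi$ and verify its ring-homomorphism and injectivity properties by hand; the underlying argument is the same.
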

\par 
Hence, we note that  Theorem~\ref{t:direct-group-endo-algr-intro} is a  generalization of the similar result   \cite[Corollary~1.3]{phung-2020} where the group $G$ is required to be sofic and $R$ is required to be the endomorphism ring of a connected commutative algebraic group.  
\par 
The paper is organized as follows. In Section~\ref{s:finite-data}, we recall basic facts in algebraic varieties and schemes of finite type. In Section~\ref{s:jacobson-schemes}, we formulate an extension of a lemma of Grothendieck to describe the set of closed points of relative fibered products of $\Z$-schemes of finite type. The proof of Theorem~\ref{t:main-ca-alg-direct} is given in Section~\ref{s:proof-main-geometric}. Finally, in  Section~\ref{s:application} we apply Theorem~\ref{t:main-ca-alg-direct} to obtain the proofs of other main results and their extensions  presented in the Introduction. 
\par

\section{Algebraic varieties and morphisms of finite type}
\label{s:finite-data}

\subsection{Algebraic varieties}  
Let $X$ be an algebraic variety  over a field $k$, i.e., a reduced separated $k$-scheme of finite type. We denote by  
$A\coloneqq X(k)$ the set of $k$-points of $X$. 
If the field $k$ is algebraically closed, we can identify $X$ with $A$. 
The following auxiliary result  states that morphisms of algebraic varieties are determined by their restrictions to the set of closed points. 

\begin{lemma}
\label{l:equalizer}
Let $k$ be an algebraically closed  field. 
Let $f, g \colon X \to Y$ be $k$-scheme morphisms of   $k$-algebraic varieties.  
Suppose that the restrictions  $f\vert_{X(k)}, g\vert_{X(k)} \colon X(k) \to Y(k)$ are equal. Then $f$ and $g$ are equal as morphisms of $k$-schemes. 
\end{lemma}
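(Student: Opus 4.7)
The plan is to argue via the equalizer subscheme of $f$ and $g$, exploiting both the separatedness of $Y$ and the fact that over an algebraically closed field the closed points of a finite-type scheme are exactly the $k$-points and are dense.

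First I would form the equalizer $E \subset X$ of the pair $(f,g)$, defined as the fibered product
\[
E \coloneqq X \times_{Y \times_k Y} Y,
\]
where $X \to Y \times_k Y$ is the morphism $(f,g)$ and $Y \to Y \times_k Y$ is the diagonal $\Delta_{Y/k}$. Since $Y$ is separated over $k$, the diagonal $\Delta_{Y/k}$ is a closed immersion, and closed immersions are stable under base change, so the projection $E \to X$ is a closed immersion. Thus $E$ is a closed subscheme of $X$ with the universal property that a morphism $T \to X$ factors through $E$ if and only if its composites with $f$ and $g$ agree.

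Next I would check that every closed point of $X$ lies in $E$. Because $X$ is of finite type over the algebraically closed field $k$, Hilbert's Nullstellensatz identifies the set of closed points of $X$ with the set $X(k)$ of $k$-points (the residue field at any closed point is $k$). By hypothesis $f$ and $g$ agree on $X(k)$, so for each closed point $x \in X$ the induced morphisms $\Spec(k) \to Y$ coming from $f$ and $g$ coincide, which by the universal property above means $x$ factors through $E$. Hence the underlying set $|E|$ contains all closed points of $X$.

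Since $X$ is of finite type over a field, it is a Jacobson scheme, so its closed points are dense in every closed subset. Therefore $|E| = |X|$, i.e., $E \hookrightarrow X$ is a closed immersion that is surjective on underlying spaces. The main step is now to upgrade this set-theoretic equality to a scheme-theoretic one: because $X$ is reduced, any closed subscheme of $X$ whose underlying space equals $|X|$ must coincide with $X$ itself (the ideal sheaf cutting out $E$ is contained in the nilradical of $\OO_X$, which vanishes). Thus $E = X$, and by the universal property of the equalizer we conclude $f = g$.

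The only mildly delicate point is the passage from $|E| = |X|$ to $E = X$, and this is precisely where reducedness of $X$ is used; without it, one would only obtain agreement of $f$ and $g$ on the reduction $X_{\mathrm{red}}$.
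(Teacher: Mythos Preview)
Your argument is correct: the equalizer $E$ is a closed subscheme of $X$ by separatedness of $Y$, it contains all closed points by the hypothesis and the Nullstellensatz, and then Jacobson-ness plus reducedness of $X$ force $E=X$. The paper itself does not give a proof but simply refers to \cite[Lemma~7.2]{cscp-alg-ca}; your write-up is the standard equalizer argument one would expect behind that citation, so there is nothing to compare.
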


\begin{proof}
See, e.g.,  \cite[Lemma~7.2]{cscp-alg-ca}. 
\end{proof}
\par 
Recall also that an algebraic group is a group that is an algebraic variety with group operations
given by algebraic morphisms (cf.~\cite{milne}).

\subsection{Models of morphisms of finite type}
\label{s:model-finite-data} 

We shall need the following   auxiliary lemma in algebraic geometry: 
\begin{lemma}
\label{l:model-finite-data}
Let $n \in \N$ and let $X, Y$ be algebraic varieties over a field $k$. Let $f_i \colon X^n \to Y$, $i \in I$, be finitely many morphisms of $k$-algebraic varieties. Then there exist a finitely generated $\Z$-algebra $R \subset k$ and $R$-schemes of finite type $X_R$, $Y_R$ and $R$-morphisms $f_{i,R} \colon (X_R)^n \to Y_R$ of $R$-schemes with $X=X_R \otimes_R k$, $Y= Y_R \otimes_R k$, and $f_i=f_{i,R} \otimes_R k$ (base change to $k$). Moreover, if $X=Y$, one can take $X_R=Y_R$.
\end{lemma}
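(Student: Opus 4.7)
The plan is to apply the standard ``spreading out'' technique from EGA IV: any $k$-scheme of finite type, and any $k$-morphism between such schemes, descends to a model over some finitely generated $\Z$-subalgebra of $k$, because all the defining data involve only finitely many polynomial equations with coefficients in $k$. Equivalently, writing $k$ as the filtered colimit of its finitely generated $\Z$-subalgebras and using that a field is Noetherian (so finite type equals finite presentation), one invokes the general descent results (EGA IV, 8.8.2 for schemes and 8.10.5 for morphisms), noting that finitely many descents can always be merged into a single $R$ by taking a large enough subalgebra.

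Concretely, I would first choose finite affine open covers $X=\bigcup_j U_j$ with $U_j=\Spec A_j$ and $Y=\bigcup_\ell V_\ell$ with $V_\ell=\Spec B_\ell$, and fix presentations
\[
A_j \cong k[t_{j,1},\dots,t_{j,m_j}]/(g_{j,1},\dots,g_{j,r_j}), \qquad B_\ell \cong k[s_{\ell,1},\dots,s_{\ell,n_\ell}]/(h_{\ell,1},\dots,h_{\ell,q_\ell}),
\]
using that $k$ is Noetherian. The gluing data on $X$ (respectively $Y$) on double overlaps $U_j\cap U_{j'}$ is given, up to a further refinement by finitely many principal opens $D(\varphi)$, by finitely many $k$-algebra maps, again specified by finitely many polynomials. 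Next, for each morphism $f_i\colon X^n\to Y$, I would cover each product $U_{j_1}\times_k\cdots\times_k U_{j_n}$ by finitely many principal open subsets over which $f_i$ lands in a single affine chart $V_\ell$; on each such piece $f_i$ is determined by finitely many polynomial formulas giving the images of the generators $s_{\ell,\bullet}$.

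I would then let $R\subset k$ be the $\Z$-subalgebra generated by the finite set of all coefficients of all the polynomials listed above (equations, gluing formulas, localization denominators, and formulas for the $f_i$); this is manifestly a finitely generated $\Z$-algebra. Reinterpreting the same polynomial equations over $R$ yields affine $R$-schemes $\Spec A_{j,R}$, $\Spec B_{\ell,R}$, which glue via the same formulas to $R$-schemes of finite type $X_R$ and $Y_R$, and the same polynomial formulas define $R$-morphisms $f_{i,R}\colon (X_R)^n\to Y_R$. Base change along $R\hookrightarrow k$ recovers $X$, $Y$, and the $f_i$ by construction. For the final clause, if $X=Y$ one simply takes the same affine cover, presentations, and gluing data for $Y$ as for $X$, so that $X_R=Y_R$ tautologically.

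The main (minor) obstacle is bookkeeping the \emph{compatibility} conditions: the cocycle identities making the affine charts glue, and the identities ensuring each $f_i$ is a well-defined morphism on overlaps, are polynomial equations in the coefficients that hold over $k$. If any such identity requires additional coefficients to be witnessed as an equality in $R$, one simply enlarges $R$ by those finitely many extra elements; since only finitely many identities must hold and each involves finitely many coefficients, the resulting $R$ remains a finitely generated $\Z$-algebra. This is exactly the point encapsulated by the EGA IV descent theorems, and it is the only place where some care is needed beyond writing down equations.
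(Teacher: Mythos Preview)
Your proposal is correct and is essentially the same approach as the paper's, which simply cites EGA~IV, \S8.8 (notably Scholie~8.8.3) and Proposition~8.9.1 for the spreading-out of finitely many schemes and morphisms over a finitely generated $\Z$-subalgebra of $k$. Your sketch just unpacks what those references say.
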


\begin{proof}
See, e.g., \cite[Section~8.8]{ega-4-3}, notably \cite[Scholie~8.8.3]{ega-4-3}, and  \cite[Proposition~8.9.1]{ega-4-3}. 
\end{proof}

\section{An extension of Grothendieck's lemma}  
\label{s:jacobson-schemes}

\subsection{Jacobson schemes}
We recall the notion of Jacobson schemes in \cite{ega-4-3}. 
Let $U$ be a topological space. A subset of $U$ which is the intersection of an open subset and a closed subset is said to be \emph{locally closed}. A \emph{constructible} subset of $U$ is a finite union of locally closed subsets of $U$. Observe that the complement of a constructible subset is also constructible. 
\par 
A scheme is called \emph{Jacobson scheme} if every nonempty constructible subset contains a closed point. For every field $k$, it follows from  \cite[Proposition~10.4.2]{ega-4-3} and \cite[Corollaire~10.4.6]{ega-4-3} that every $\Z$-scheme (resp. every $k$-scheme) of finite type is a Jacobson scheme. 
\par 
For every scheme $X$, we denote by $\kappa(x)$ the residual field at a point $x \in X$. Then we have the following result: 
\par 
\begin{lemma}
\label{l:t-p-d-stable}
Let $X$ be a scheme and let $Y$ be a Jacobson scheme. Suppose that $f \colon X \to Y$ is a morphism of finite type. Then $X$ is Jacobson and for every closed point $x$ of $X$, the image $f(x)$ is a closed point of $Y$. Moreover, we have a canonical inclusion $\kappa(f(x)) \subset \kappa(x)$ of fields.  
\end{lemma}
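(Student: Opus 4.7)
My plan is to reduce each of the three claims to the affine case and then invoke the classical theory of Jacobson rings. Being a Jacobson scheme, sending closed points to closed points, and relating residue fields are all local properties on $X$ and $Y$, so after covering $X$ and $Y$ by compatible affine opens $\Spec B \subset X$ and $\Spec A \subset Y$ with $f(\Spec B) \subset \Spec A$, it suffices to analyze a ring map $\phi \colon A \to B$ of finite type with $A$ a Jacobson ring.

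At this affine level I would appeal to the classical fact (see, e.g., \cite[Corollaire~10.4.6]{ega-4-3}): if $A$ is a Jacobson ring and $B$ is a finitely generated $A$-algebra, then (i) $B$ is Jacobson, (ii) for every maximal ideal $\mathfrak{m} \subset B$ the contraction $\mathfrak{m} \cap A$ is maximal in $A$, and (iii) the induced extension $A/(\mathfrak{m} \cap A) \hookrightarrow B/\mathfrak{m}$ is finite. Geometrically, (i) gives that $X$ is Jacobson; (ii) gives that the image of any closed point of $X$ is a closed point of $Y$; and applying (iii) to $\mathfrak{m} = \mathfrak{m}_x$ produces a finite field extension $\kappa(f(x)) \hookrightarrow \kappa(x)$. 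The canonicity of this inclusion comes from the local ring morphism $\mathcal{O}_{Y, f(x)} \to \mathcal{O}_{X, x}$ induced by $f$, whose quotient by the maximal ideals is precisely the map $\kappa(f(x)) \to \kappa(x)$; any ring homomorphism between fields being injective, this is indeed an inclusion.

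The main ``obstacle'' is really just the verification of (ii) and (iii), which rests on a Nullstellensatz-style argument: a field that is finitely generated as an algebra over a subring forces that subring to be a field. Since this is thoroughly documented in \cite{ega-4-3}, the proof reduces to bookkeeping, i.e.\ to a routine check that the scheme-theoretic statement follows from the cited affine commutative algebra result together with the local-to-global nature of the Jacobson property.
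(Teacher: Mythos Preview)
Your proposal is correct; the paper's own proof simply cites \cite[Corollary~10.4.7]{ega-4-3} for the first two assertions and remarks that the residue-field inclusion is a basic property of morphisms of schemes, with no further details. Your affine reduction and appeal to the commutative-algebra form of the Jacobson property is precisely the content behind that EGA reference, so you are not taking a different route but rather unpacking the citation. One small remark: after contracting a maximal ideal $\mathfrak{m} \subset B$ to a maximal ideal of $A$, you should note that a point closed in the open affine $\Spec A \subset Y$ is automatically closed in $Y$ because $Y$ is Jacobson; this is implicit in your local-to-global bookkeeping but worth stating.
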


\begin{proof}
See \cite[Corollary~10.4.7]{ega-4-3}. The last statement is a basic property in scheme theory.
\end{proof}

\subsection{An extension of Grothendieck's lemma} 

To describe the set of closed points of a $\Z$-scheme of finite type, we recall the following lemma  due to Grothendieck:  

\begin{lemma}
\label{l:grothendieck-closed-point}
Let $X$ be a $\Z$-scheme of finite type. 
Then the set of closed points of $X$ is given by the following union of finite sets  $\cup_{p,d} T_{p,d}$ over all prime numbers $p \in \N$ and all $d \in \N$ where 
\begin{equation*}
    T_{p,d} = \{ x \in X \colon \vert \kappa (x) \vert = p^r, 1 \leq r \leq d\} \subset X\otimes_\Z \mathbb{F}_p. 
\end{equation*}
\end{lemma}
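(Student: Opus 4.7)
The plan is to verify two set-theoretic inclusions together with one finiteness statement: (i) every closed point of $X$ belongs to some $T_{p,d}$; (ii) every $x \in T_{p,d}$ is a closed point of $X$; (iii) each $T_{p,d}$ is finite.

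For (i), I would apply Lemma~\ref{l:t-p-d-stable} to the structural morphism $\pi\colon X \to \Spec \Z$, which is of finite type with $\Spec \Z$ Jacobson. For any closed point $x \in X$, the image $\pi(x)$ is a closed point of $\Spec \Z$, hence of the form $(p)$ for some prime $p$, so $x$ lies in the fiber $X \otimes_\Z \F_p$. Since this fiber is of finite type over the field $\F_p$ and $x$ is a closed point of it, Hilbert's Nullstellensatz forces $\kappa(x)$ to be a finite extension of $\F_p$; writing $|\kappa(x)| = p^r$ we obtain $x \in T_{p,r}$.

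For (ii), any $x \in T_{p,d}$ lies in $X_p \coloneqq X \otimes_\Z \F_p$, which is Jacobson and of finite type over $\F_p$; since $\kappa(x)$ is finite over $\F_p$, the point $x$ is closed in $X_p$, and because $X_p$ is a closed subscheme of $X$ it is also closed in $X$. For (iii), I would cover $X_p$ by finitely many affines $\Spec A_j$ with each $A_j$ a finitely generated $\F_p$-algebra. Any $x \in T_{p,d}$ satisfies $\kappa(x) = \F_{p^r}$ with $r \le d$, hence $r \mid d!$, so $\kappa(x)$ embeds into $\F_{p^{d!}}$. Picking an affine $\Spec A_j$ containing $x$ together with such an embedding yields an $\F_p$-algebra homomorphism $A_j \to \F_{p^{d!}}$, i.e.\ an element of $\Hom_{\F_p\text{-alg}}(A_j, \F_{p^{d!}})$. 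The resulting forgetful map $\bigsqcup_j \Hom_{\F_p\text{-alg}}(A_j, \F_{p^{d!}}) \to T_{p,d}$ is surjective, and its source is finite because each $A_j$ is finitely generated over $\F_p$; hence $T_{p,d}$ is finite.

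The principal obstacle is step (iii): the two inclusions are essentially formal consequences of the Jacobson property already packaged in Lemma~\ref{l:t-p-d-stable}, whereas the finiteness of $T_{p,d}$ requires the reduction to counting $\F_{p^{d!}}$-valued points on affine schemes of finite type over $\F_p$, via the divisibility $r \mid d!$ for $r \le d$.
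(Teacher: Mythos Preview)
Your argument is correct. The paper itself does not give a proof of this lemma; it simply refers the reader to \cite[Lemma~10.4.11.1]{ega-4-3}. Your three-step outline therefore supplies strictly more detail than the paper, and each step is sound: (i) follows from Lemma~\ref{l:t-p-d-stable} applied to $X \to \Spec \Z$ together with the Nullstellensatz for schemes of finite type over a field; (ii) uses that $X_p$ is a closed subscheme of $X$; and (iii) bounds $T_{p,d}$ by the finite set of $\F_{p^{d!}}$-points on finitely many affine charts of $X_p$. The divisibility trick $r \mid d!$ in (iii) is a nice touch but not essential---one could alternatively sum over $r \le d$ the number of $\F_{p^r}$-points on each chart.
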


\begin{proof}
See \cite[Lemma~10.4.11.1]{ega-4-3}.  
\end{proof}

For each prime number $p$ and every $\Z$-scheme $V$, we denote by  $V_p=V \otimes_\Z \mathbb{F}_p$  the fibre of $V$ above the prime $p$. 
Let $X$ be an $S$-scheme and let $M$ be a finite set. To keep track of the factor, we denote by $X_S^M$ the $M$-fold fibered product $X\times_S \dots \times_S X$ of $S$-schemes indexed over the set $M$. 
\par 
We deduce from Lemma~\ref{l:grothendieck-closed-point} the following technical  consequence that we shall need in the proof of  Theorem~\ref{t:main-ca-alg-direct}:  

\begin{corollary}
\label{c:grothendieck-closed-point} 
Let $X$ be an $S$-scheme of finite type where $S$ is a $\Z$-scheme of finite type. Then for every finite set $M$, the set of closed points of $X_S^M$ is the union of finite sets  $\cup_{s,p,d} T_{s,p,d}^M$ over all prime numbers $p \in \N$, all closed points $s \in S_p$, and all $d \in \N$ where 
\begin{equation*}
    T_{p,s, d} = \{ x \in X_s \colon \vert \kappa (x) \vert = p^r, 1 \leq r \leq d\} \subset X_p,  
\end{equation*}
where $X_s= X \otimes_S \kappa(s)$ is the fibre of $X$ above $s$.  
\end{corollary}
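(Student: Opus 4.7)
The plan is to apply Grothendieck's lemma (Lemma \ref{l:grothendieck-closed-point}) to the $\Z$-scheme $X_S^M$ itself and then refine the resulting decomposition by tracking the image of a closed point in $S$ together with its images under the $M$ projections to $X$.

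First I would verify that $X_S^M$ is a $\Z$-scheme of finite type: since $X \to S$ and $S \to \Spec \Z$ are of finite type, so is the composition $X \to \Spec \Z$, and finite fibered products of schemes of finite type over a base of finite type over $\Z$ remain of finite type over $\Z$. Lemma \ref{l:grothendieck-closed-point} then already presents the closed points of $X_S^M$ as a countable union, indexed by pairs $(p,d)$, of the finite sets $\{x \in X_S^M : |\kappa(x)| = p^r \text{ for some } 1 \leq r \leq d\}$. The remaining task is to further slice each such piece according to the image in $S$ and the projections to $X$.

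To incorporate the parameter $s \in S_p$ I would invoke Lemma \ref{l:t-p-d-stable}. For each closed point $x$ of $X_S^M$, the structural morphism $X_S^M \to S$ is of finite type and $S$ is Jacobson, so the image $s$ of $x$ in $S$ is a closed point with $\kappa(s) \hookrightarrow \kappa(x)$. In particular $\kappa(s)$ is a finite field of characteristic $p$, hence $s \in S_p$, and $x$ lies in the fibre $(X_S^M)_s \cong (X_s)_{\kappa(s)}^M$. Applying the same lemma to each of the $M$ projection morphisms $\pi_i \colon X_S^M \to X$ (also of finite type, with $X$ Jacobson), each $\pi_i(x)$ is a closed point of $X$ lying over $s$, hence a closed point of $X_s$, whose residue field has order dividing $p^r$. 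Thus $\pi_i(x) \in T_{p,s,d}$, and the tuple $(\pi_i(x))_{i \in M}$ lies in $T_{p,s,d}^M$, yielding the asserted decomposition of the closed points of $X_S^M$ indexed over the triples $(p,s,d)$.

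The main subtlety, and the reason the statement is phrased as a union of finite sets rather than a canonical bijection, is that over the finite residue field $\kappa(s)$ the projection from closed points of the fibered product $(X_s)_{\kappa(s)}^M$ to $M$-tuples of closed points of $X_s$ is in general many-to-one, owing to the diagonal action of $\mathrm{Gal}(\overline{\kappa(s)}/\kappa(s))$ on geometric points; nevertheless each $T_{p,s,d}^M$ is finite as a Cartesian product of finite sets, and only finitely many closed points of $X_S^M$ can project into it, which is all that is needed for the claimed union-of-finite-sets description. No further computation is required beyond the two cited lemmas.
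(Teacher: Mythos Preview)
Your proof is correct and follows essentially the same route as the paper: both arguments combine Grothendieck's lemma (Lemma~\ref{l:grothendieck-closed-point}) with the Jacobson-scheme lemma (Lemma~\ref{l:t-p-d-stable}) applied to the structural map and the projections. The only cosmetic difference is ordering---the paper first passes to the fibre over each prime $p$ and then invokes Grothendieck's lemma on $X_p$, whereas you apply Grothendieck's lemma to $X_S^M$ globally and then refine---and you are slightly more explicit than the paper about the many-to-one nature of the map from closed points of the fibered product to $M$-tuples.
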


\begin{proof}
First observe that $X$ and $X_S^M$ are $\Z$-schemes of finite type. Let us fix a prime number $p$ then we have  
\begin{equation} 
(X_S^M)_p= X_S^M \otimes_\Z \mathbb{F}_p 
=X_p\times_{S_p} \dots \times_{S_p} X_p= (X_p)_{S_p}^M. 
\end{equation}
\par 
Let $f \colon X \to S$ be the structural morphism. Then we infer from Lemma~\ref{l:t-p-d-stable} that every closed point of $(X_p)_{S_p}^M$ is given by an $M$-tuple $(x_g)_{g \in M}$ of closed points of $X_p$ such that $f(x_g)=s$ for all $g \in M$ for some closed point $s \in S_p$. The last condition is equivalent to the requirement that $x_g \in X_s$ for all $g \in M$ for some closed point $s \in S$. 
\par 
On the other hand,  Lemma~\ref{l:grothendieck-closed-point} implies the set of closed points of $X_p$ is given by the union over $d \in \N$ of the finite sets $ T_{p,d} = \{ x \in X_p \colon \vert \kappa (x) \vert = p^r, 1 \leq r \leq d\}$. 
From these descriptions, the conclusion of the corollary follows immediately. 
\end{proof}
\par

\section{Proof of the main geometric result}
\label{s:proof-main-geometric}

We are now in position to prove the main result of the paper Theorem~\ref{t:main-ca-alg-direct} which is a geometric direct finiteness property of symbolic algebraic varieties.

\begin{proof}[Proof of Theorem~\ref{t:main-ca-alg-direct}] 
Denote by  $A=X(K)$ the set of $K$-points of $X$. In particular, we can identify $X$ with the set  $A$. 
Let us fix $\tau , \sigma \in CA_{alg}(G,X,K)$ such that   $\sigma \circ \tau = \Id$. 
Let $\Gamma\coloneqq \tau(A^G) \subset A^G$ be the image of $\tau$.  
\par 
Now fix a finite subset  $\Omega \subset G$. 
Choose a finite subset $M \subset G$ large enough such that $M$ is a common memory set of both $\tau$ and  $\sigma$ and such that $1_G \in M$ and $M=M^{-1}$. Let 
$\mu \colon A^M \to A$ and $\eta \colon A^M \to A$ be respectively the algebraic local defining maps of $\tau$ and $\sigma$. 
\par 
Up to enlarging $M$, we can clearly suppose that $\Omega\subset M$. For every subset $E \subset G$, we denote $\Gamma_E=\{x\vert_E \colon x \in \Gamma\}\subset A^E$. We will show that  $\Gamma_M= A^M$. 
\par
Consider the algebraic $K$-morphism $\tau^+_M \colon A^{M^2} \to A^M$ of algebraic varieties given by 
$\tau^+_M(c)(g)= \mu((g^{-1}c)\vert_M)$ for every $c \in A^{M^2}$ and $g \in M$. Since $M$ is a memory set of $\tau$,  it is clear that we have  $\Gamma_M = \tau_M^+ (A^{M^2})$.   \par 
Since $\sigma \circ \tau= \Id$, it follows that for every $c \in A^{M^2}$, we have 
$\eta (\tau_M^+(c))=c(1_G)$.
Consequently, we infer from  Lemma~\ref{l:equalizer} that 
the composition morphism $\eta \circ \tau_M^+ \colon A^{M^2} \to A$ is exactly the canonical  projection morphism of $K$-schemes $\pi \colon A^{M^2} \to A^{\{1_G\}}$ induced by the inclusion $\{1_G\} \subset M^2$. 
\par 
Consider the finitely generated  $\Z$-sub-algebra 
$R\subset K$ given by Lemma~\ref{l:model-finite-data} applied to the morphisms $\mu, \eta \colon A^M \to A$. 
Consequently, we can find a corresponding $R$-scheme of finite type $A_R$ and morphisms $\mu_R, \eta_R \colon A_R^M \to A_R$ of $R$-schemes of finite type such that $A= A_R \otimes_R K$, $\mu= \mu_R \otimes_R K$, and $\eta=\eta_R \otimes_R K$. Here, we denote by $A_R^M = A_R \times_R \dots \times_R A_R$ the fibered product indexed by $M$ of the $R$-schemes $A_R$. 
\par 
For each $g \in G$, we have the 
canonical isomorphisms $A_R^{gM} \simeq A_R^M$ and $A_R^{\{g\}}\simeq A_R^{\{1_G\}}$ induced respectively by the trivial set bijections $M \simeq gM$, $h \mapsto gh$, and $ {\{1_G\}}\simeq {\{g\}}$.  Then 
similarly as above, we obtain an $R$-morphism of finite type  $\varphi \colon A_R^{M^2} \to A_R^M$ of $R$-schemes which is defined as a fibered product  over $g \in M$ of the morphisms $\mu_{g,R}\colon A_R^{gM} \to A_R^{\{g\}}$ induced by $\mu_R \colon A_R^M \to A_R$  via the canonical isomorphisms $A_R^{gM} \simeq A_R^M$ and $A_R^{\{g\}}\simeq A_R^{\{1_G\}}$. It follows immediately that  $\tau_M^+= \varphi \otimes_R K$. 
\par
Since $R \subset K$ and the composition map $\eta \circ \tau_M^+ \colon A^{M^2} \to A$ is exactly the canonical projection $A^{M^2} \to A^{\{1_G\}}$ as  $K$-morphisms of $K$-schemes, we can suppose, up to adding a finite number of generators to $R$, that the model morphism $\eta_R \circ \varphi \colon A_R^{M^2} \to A_R$ is  equal to the canonical projection $A_R^{M^2} \to A_R^{\{1_G\}}$ as $R$-morphisms of $R$-schemes (cf.~\cite[Scholie~8.8.3]{ega-4-3}). 
\par 
\begin{Claim} 
The morphism  $\varphi$ is surjective. 
\end{Claim}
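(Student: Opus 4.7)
The plan is to reduce to closed points via Jacobson-scheme techniques and then exploit surjunctivity of $G$ on a finite alphabet obtained by specializing $R$ to a finite field. Since $R$ is a finitely generated $\Z$-algebra, both $A_R^{M^2}$ and $A_R^M$ are $\Z$-schemes of finite type, hence Noetherian and Jacobson by \cite[Proposition~10.4.2]{ega-4-3}. By Chevalley's theorem, $\im(\varphi)$ and its complement are constructible; since every non-empty constructible subset of a Jacobson scheme contains a closed point, it suffices to show that every closed point of $A_R^M$ lies in $\im(\varphi)$.

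Fix such a closed point $y$. By Corollary~\ref{c:grothendieck-closed-point} applied with $S=\Spec R$, $y$ is an $M$-tuple $(y_g)_{g\in M}$ of closed points of the fiber $A_s\coloneqq A_R\otimes_R\kappa(s)$ above some closed point $s\in\Spec R$ with $\kappa(s)=\F_{p^a}$ finite, and each $\kappa(y_g)$ a finite extension of $\kappa(s)$. Choosing $N$ so that $\F_{p^N}$ contains $\kappa(s)$ and every $\kappa(y_g)$, and fixing embeddings $\kappa(y_g)\hookrightarrow\F_{p^N}$, assembles the $y_g$ into an $\F_{p^N}$-point $\tilde y\in A_s(\F_{p^N})^M$ whose image in $A_R^M$ is the closed point $y$.

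Next, set $B\coloneqq A_s(\F_{p^N})$, which is finite because $A_s\otimes_{\kappa(s)}\F_{p^N}$ is of finite type over $\F_{p^N}$. The base-changes $\mu_R\otimes_R\F_{p^N}$ and $\eta_R\otimes_R\F_{p^N}$ induce maps $B^M\to B$ that define cellular automata $\tau^*,\sigma^*\colon B^G\to B^G$ with common memory $M$. Base-changing the $R$-morphism identity $\eta_R\circ\varphi=\pi$ to $\F_{p^N}$ yields $\eta^*\circ\varphi^*=\pi^*$ on $B^{M^2}$, and combining this with the $G$-equivariance of $\tau^*$ and the memory-set property gives $\sigma^*\circ\tau^*=\Id$ on $B^G$. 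Hence $\tau^*$ is injective, and surjunctivity of $G$ applied to the finite alphabet $B$ makes it bijective; in particular, the induced map $\varphi^*\colon B^{M^2}\to B^M$ is surjective.

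To conclude, pick $\tilde z\in B^{M^2}$ with $\varphi^*(\tilde z)=\tilde y$: the associated $R$-morphism $\Spec\F_{p^N}\to A_R^{M^2}$ composed with $\varphi$ equals the morphism $\Spec\F_{p^N}\to A_R^M$ defined by $\tilde y$, whose set-theoretic image is $\{y\}$, so $y\in\im(\varphi)$ as required. The main technical step to watch is the faithful translation of the scheme-theoretic identity $\eta_R\circ\varphi=\pi$ into the configuration-space identity $\sigma^*\circ\tau^*=\Id$ over the finite alphabet $B$; once this is in hand, surjunctivity of $G$ supplies the crucial surjectivity, and the Jacobson reduction closes the argument.
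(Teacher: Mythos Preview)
Your proof is correct and follows the paper's overall strategy: reduce via the Jacobson property and Chevalley's theorem to showing that every closed point of $A_R^M$ lies in $\im(\varphi)$, then manufacture a finite alphabet over which surjunctivity applies. The implementation differs only in the choice of that finite alphabet. The paper works directly with the finite set $T_{p,s,d}$ of closed points of $A_s$ whose residue field has cardinality at most $p^d$, using Lemma~\ref{l:t-p-d-stable} to see that $\mu_R$ and $\eta_R$ restrict to self-maps of $T_{p,s,d}$; you instead base-change along $R\to\kappa(s)\hookrightarrow\F_{p^N}$ and take $B=A_s(\F_{p^N})$. Your variant has the pleasant feature that $(A_R^M)(\F_{p^N})$ over this base point is literally the Cartesian power $B^M$, so the cellular-automaton formalism applies without any bookkeeping about how closed points of a fibered product relate to tuples of closed points of the factors.

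One small imprecision to fix: a closed point $y\in A_R^M$ is not literally an $M$-tuple $(y_g)$, it merely \emph{projects} to one, and for arbitrary independent choices of embeddings $\kappa(y_g)\hookrightarrow\F_{p^N}$ the resulting $\tilde y\in B^M$ need not map to $y$ (already for $|M|=2$ and $y_1=y_2$ with $\kappa(y_1)=\F_{p^2}$ there are two closed points of the fibre product over $(y_1,y_2)$). The clean repair is to choose $N$ with $\kappa(y)\subset\F_{p^N}$, fix one embedding $\kappa(y)\hookrightarrow\F_{p^N}$, and take $\tilde y$ to be the induced $\F_{p^N}$-point of $A_R^M$; then its image is $y$ by construction, and the rest of your argument goes through unchanged.
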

\begin{proof}[Proof of the Claim]
Indeed, let $\PP\subset \N$ denote the set of prime numbers. For each $p \in \PP$, let us define $A_p = A \otimes_\Z \mathbb{F}_p$ the fibre of $A$ above the prime $p$. 
\par 
Since $A_R$ is an $R$-scheme of finite type and since $R$ is a $\Z$-algebra of finite type, the scheme $A_R$ is also a $\Z$-scheme of finite type. We deduce that $A_R$ and thus $A_R^M$ are Jacobson schemes. Therefore, we infer from Corollary~\ref{c:grothendieck-closed-point} that the set of closed points of $A_R^M$ is given by $\Delta = \cup_{p \in \PP, s \in S_p,  d\in \N} T_{p,d}^M$ where $S= \Spec R$ and $s \in S_p$ denotes a closed point and 
\begin{equation}
   T_{p,s, d}= \{x \in A_s \colon \vert \kappa(x) \vert=p^r, 1 \leq r \leq d\} 
\end{equation}
is a finite subset of $A_s$ and thus of $A_p$.  
\par 
Fix a prime $p \in \PP$ and a closed point $s \in S_p$. Let $T_{s} = \cup_{d \in \N} T_{p,s,d}$. We obtain by  restriction to the fibre $A_s \subset A$ above $s$ two cellular automata  
$\tau_s, \sigma_s \colon T_s^G \to T_s^G$ with  
$\tau_s= \tau\vert_{T_s^G}$ and $\sigma_s=\sigma\vert_{T_s^G}$. Note that the restrictions $\mu_s= \mu\vert_{T_s^M} \colon T_s^M \to T_s$ and $\eta_s= \eta\vert_{T_s^M} \colon T_s^M \to T_s$ are respectively well-defined local defining maps of $\tau_s$ and $\sigma_s$ by Lemma~\ref{l:t-p-d-stable}. 
\par 
Since $\eta_R \circ \varphi \colon A_R^{M^2} \to A_R$ is the canonical projection $A_R^{M^2} \to A_R^{\{1_G\}}$ as $R$-morphisms of $R$-schemes, 
the  restriction map $\left(\eta_R \circ \varphi \right)\vert_{T_s^{M^2}}$ is the same as the  canonical projection $T_s^{M^2} \to T_s^{\{1_G\}}$. It follows that $\sigma_s \circ \tau_s=\Id$. In particular, the cellular automaton $\tau_s$ is injective. 
\par 
Now let $d \in \N$. 
Since $\mu_{s}(T_{p,s,d}^M) \subset T_{p,s,d}$ by Lemma~\ref{l:t-p-d-stable}, we obtain a well-defined restriction cellular automaton 
$\tau_{p,s,d}=\tau_{s} \vert_{T_{p,s,d}^G} \colon T_{p,s,d}^G \to T_{p,s,d}^G$ admitting $\mu_{s}\vert_{T_{p,s,d}^M} \colon T_{p,s,d}^M \to T_{p,s,d}$ as a local defining map. As $\tau_s$ is injective, $\tau_{p,s,d}$ is also an injective cellular automaton. Since the alphabet $T_{p,s,d}$ is finite by Corollary~\ref{c:grothendieck-closed-point} and the group $G$ is surjunctive (see the Introduction), we can conclude that $\tau_{p,s,d}$ is surjective. 
\par 
Consequently, we deduce that $T_{p,s,d}^M \subset \varphi(A_R^{M^2})$ for all $p \in \PP$ and $d \in \N$. Therefore, it follows that 
$\Delta = \cup_{p \in \PP, s\in S_p, d \in \N} T_{p,s,d}^M \subset \varphi(A_R^{M^2})$. 
\par 
Observe on the one hand that $A_R^{M^2}$ and $A_R^M$ are Jacobson schemes and the image $\varphi (A_R^{M^2})$ is a constructible subset of $A_R^M$ by Chevalley's theorem (\cite[Th\' eor\` eme~1.8.4]{grothendieck-20-1964}). On the other hand, Corollary~ \ref{c:grothendieck-closed-point} tells us that $\Delta$ is   the set of all closed points of $A_R^M$. 
\par 
Therefore, we must have $\varphi (A_R^{M^2})= A_R^M$ by the definition of Jacobson schemes. The map $\varphi$ is thus surjective and the claim is proved. 
\end{proof}
\par 
 Note that surjectivity is a stable property under base change (see, e.g., \cite[Lemma~01S1]{stack}). Hence, it follows that $\tau_M^+= \varphi \otimes_R K$ is also a surjective morphism.  
Therefore, we find that 
\[ \Gamma_M= \tau_M^+(A^{M^2})= A^M.
\]
\par 
Since $\Omega \subset M$ by our choice of $M$, we deduce that $\Gamma_\Omega= A^\Omega$ for every finite subset $\Omega \subset G$. 
Therefore, $\tau(A^G)=\Gamma$ is dense in $A^G$ with respect to the prodiscrete topology. Since $\sigma \circ \tau= \Id$ by  hypothesis, $(\tau\circ \sigma)(x)=x$ for every $x \in \Gamma$. Thus, $(\tau\circ \sigma)(x)=x$ for all $x \in A^G$ as the prodiscrete topology on $A^G$ is Hausdorff and $\tau\circ \sigma$ is continuous since it is a cellular automaton (see \cite[Proposition~3.3]{cscp-alg-ca}).  Hence, $\tau \circ \sigma= \Id$ and the conclusion follows. 
\end{proof}
\par

\section{Applications} 
\label{s:application}

As applications of Theorem~\ref{t:main-ca-alg-direct}, we present in this section the   proofs of  Theorem~\ref{t:kap-near-ring-direct-finite},  Theorem~\ref{t:intro-gottchalk-kaplansky}, and Theorem~\ref{t:direct-group-endo-algr-intro} presented in the Introduction. We begin with the following generalization of Theorem~\ref{t:kap-near-ring-direct-finite}:

\begin{theorem}
\label{t:kap-near-ring-surjunctive-group}
Let $G$ be a surjunctive group and let $k$ be a field. Suppose that  
 $\alpha, \beta \in R(k,G)$ verify  $\alpha \star \beta= X_{1_G}$. Then $\beta \star \alpha=X_{1_G}$. 
\end{theorem}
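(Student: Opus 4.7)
The plan is to reduce Theorem~\ref{t:kap-near-ring-surjunctive-group} to the already proved geometric statement Theorem~\ref{t:main-ca-alg-direct}. The first step is to enlarge the field of coefficients. Fix an algebraic closure $\bar k$ of $k$. The multiplication formula~\eqref{e:def-nr} is polynomial in the coefficients, so the inclusion $k \hookrightarrow \bar k$ induces an embedding of near rings $R(k,G) \hookrightarrow R(\bar k,G)$ sending $X_{1_G}$ to $X_{1_G}$. The identity $\alpha \star \beta = X_{1_G}$ therefore persists in $R(\bar k,G)$, and the desired conclusion $\beta \star \alpha = X_{1_G}$ in $R(k,G)$ will follow by injectivity of the embedding from the corresponding statement in $R(\bar k,G)$. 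Hence we may assume that $k$ is algebraically closed, in particular infinite.

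The second step is to transport the situation across the canonical isomorphism of near rings $R(k,G) \simeq CA_{alg}(G,\A^1,k)$ provided by \cite[Theorem~10.8]{phung-2020}, which is available precisely because $k$ is infinite. A direct inspection of that isomorphism shows that each generator $X_g$ corresponds to the cellular automaton with memory set $\{g\}$ whose local defining map reads the coordinate at $g$; in particular $X_{1_G}$ corresponds to the identity cellular automaton $\Id$. Let $\sigma, \tau \in CA_{alg}(G,\A^1,k)$ denote the images of $\alpha, \beta$ respectively. The hypothesis $\alpha \star \beta = X_{1_G}$ then translates into $\sigma \circ \tau = \Id$.

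The final step is to invoke Theorem~\ref{t:main-ca-alg-direct} applied to the surjunctive group $G$, the algebraic variety $X = \A^1$ over the algebraically closed field $k$, and the two algebraic cellular automata $\sigma, \tau$ just produced. This yields $\tau \circ \sigma = \Id$, which, pulled back across the near ring isomorphism, reads $\beta \star \alpha = X_{1_G}$ in $R(\bar k,G)$, and hence in $R(k,G)$ by the first step.

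No serious obstacle is anticipated: the argument is essentially a change-of-framework, and the genuine content is already contained in Theorem~\ref{t:main-ca-alg-direct}. The only point deserving a brief verification is the compatibility of the isomorphism from \cite[Theorem~10.8]{phung-2020} with the multiplicative units on both sides; and since Theorem~\ref{t:main-ca-alg-direct} reverses an arbitrary one-sided identity, even the direction of composition chosen by the isomorphism has no effect on the final conclusion.
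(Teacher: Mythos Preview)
Your proposal is correct and follows essentially the same approach as the paper's own proof: pass to an algebraically closed overfield, use the near-ring isomorphism $R(K,G)\simeq CA_{alg}(G,\A^1,K)$ from \cite[Theorem~10.8]{phung-2020}, and then apply Theorem~\ref{t:main-ca-alg-direct}. Your extra remarks on the image of $X_{1_G}$ and on why the direction of composition is harmless are accurate and merely make explicit what the paper leaves implicit.
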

 
 \begin{proof}
 We can trivially embed $k$ into some algebraically closed field $K$. In particular, we have an inclusion $R(k,G) \subset R(K,G)$. 
 Let $\A^1$ denote the affine line then we infer from  \cite[Theorem~10.8]{phung-2020} that 
there exists a canonical isomorphism of near rings 
$\Psi \colon R(K,G) \to CA_{alg}(G,\A^1,K)$ where the multiplication on $CA_{alg}(G,\A^1,K)$ is defined as the composition of maps. 
\par 
Therefore, we find that:  
\begin{equation}
\Psi(\alpha) \circ \Psi (\beta)= \Psi(\alpha \star \beta)= \Psi(X_{1_G})=\Id. 
\end{equation}
\par 
It follows from Theorem~\ref{t:main-ca-alg-direct} that $\Psi(\beta) \circ \Psi (\alpha)=\Id$. Hence, $\Psi(\beta \star \alpha)=\Id= \Psi(X_{1_G})$ and we deduce that $\beta \star \alpha=X_{1_G}$. The proof of the theorem is complete.
 \end{proof}

The proof of Theorem~\ref{t:intro-gottchalk-kaplansky} is very similar.

\begin{proof}[Proof of Theorem~\ref{t:intro-gottchalk-kaplansky}] 
We have to show that every surjunctive group satisfies Kaplansky’s stable finiteness conjecture. 
Let $k$ be a field and let $G$ be a surjunctive group. Let $n \geq 1$ and let 
$\alpha, \beta \in \mathrm{Mat}_n(k[G])$ be such that $\alpha \beta = 1$. We embed $k$ into an arbitrary algebraically closed field $K$. Then it follows trivially that $\alpha , \beta \in \mathrm{Mat}_n(K[G])$ and $\alpha \beta = 1$. 
We infer from \cite[Corollary~ 8.7.8]{ca-and-groups-springer} a ring isomorphism  
$\psi \colon \mathrm{Mat}_n(K[G])\to  LCA(G, K^n)$ 
 where $LCA(G, K^n)$ is the $K$-algebra of cellular automata $K^G \to K^G$ admitting $K$-linear local defining maps. The multiplication on $LCA(G, K^n)$ is given by the composition of maps.  Therefore,  $\psi(\alpha)\circ \psi(\beta)=\psi(\alpha\beta)=\psi(1)=\Id$. 
 \par 
As  $LCA(G,K^n)\subset CA_{alg}(G,\A^n, K)$, where $\A^n$ is the $n$-dimensional affine space, and as $G$ is surjunctive, Theorem~\ref{t:main-ca-alg-direct} implies that $\psi(\beta)\circ \psi(\alpha)=\Id$ and therefore $\psi(\beta \alpha)=\Id$. It follows that $\beta \alpha= 1$ since $\psi$ is an isomorphism. The proof is thus complete. 
\end{proof}

\subsection{Symbolic group varieties}
\label{s:endo-ring-kaplansky}
Let $G$ be a group and let $k$ be a field. Let $X$ be an algebraic group over $k$ and let $A=X(k)$. 
The set $CA_{algr}(G,X,k)$ of \emph{algebraic group cellular automata} consists of cellular automata $\tau \colon A^G \to A^G$ which admit a memory set $M$ 
with local defining map $\mu \colon A^M \to A$ induced by some homomorphism of algebraic groups
$f \colon X^M \to X$, i.e., $\mu=f\vert_{A^M}$. It is clear that we have an inclusion $CA_{algr}(G,X,k) \subset CA_{alg}(G,X,k)$. See also \cite{phung-dual-surjunctivity}, \cite{phung-dcds}, \cite{phung-israel}, or \cite{cscp-invariant-ca-alg} for more details. 

\begin{proof}[Proof of Theorem~\ref{t:direct-group-endo-algr-intro}] 
Let $Y$ be a commutative  algebraic group over an  algebraically closed field $K$. Let $R=\End_K(Y)$ be the endomorphism ring (of $K$-homomorphisms of algebraic groups) of $Y$. Let $G$ be a surjunctive group. Let $n \in \N$ and let $X=Y^n$. Then  $\mathrm{Mat}_n(R)=\End_K(X)$  by \cite[Lemma~9.5]{phung-2020}. 
By \cite[Proposition~9.3]{phung-2020}, there exists a natural  ring isomorphism $\End_K(X)[G] \simeq CA_{algr}(G,X,K)$. We infer from Theorem~\ref{t:main-ca-alg-direct} that $CA_{algr}(G,X,K)$ is directly finite as a subspace of $CA_{alg}(G,X,K)$. It follows that the ring $\mathrm{Mat}_n(R)[G]=\End_K(X)[G]$ is also directly finite. Finally, since we have an  isomorphism  $\mathrm{Mat}_n(R[G])\simeq \mathrm{Mat}_n(R)[G]$ by \cite[Lemma~9.4]{phung-2020}, the ring $\mathrm{Mat}_n(R[G])$ is also directly finite and the conclusion follows.  
\end{proof}

\bibliographystyle{siam}

\end{document}